\newcommand{\nobracket}{}
\newcommand{\nocomma}{}
\newcommand{\nosymbol}{}
\newcommand{\tmdummy}{$\mbox{}$}
\newcommand{\tmem}[1]{{\em #1\/}}
\newcommand{\tmname}[1]{\textsc{#1}}
\newcommand{\tmop}[1]{\ensuremath{\operatorname{#1}}}
\newcommand{\tmstrong}[1]{\textbf{#1}}
\newcommand{\tmtextbf}[1]{{\bfseries{#1}}}
\newcommand{\tmtextrm}[1]{{\rmfamily{#1}}}
\newenvironment{proof}{\noindent\textbf{Preuve\ }}{\hspace*{\fill}$\Box$\medskip}
\newtheorem{definition}{D\'efinition}
\newtheorem{lemma}{Lemme}
{\theorembodyfont{\rmfamily}\newtheorem{remark}{Remarque}}
\newtheorem{theorem}{Th\'eor\`eme}
\begin{document}

\title{R{\'e}solution du $\partial \bar{\partial}$ pour les courants
prolongeables d{\'e}finis sur un domaine pseudoconvexe non born{\'e} de
$\mathbb{C}^n$}
\author{ Eramane Bodian, Waly Ndiaye et Salomon Sambou}
\maketitle

\begin{abstract}
  On r{\'e}sout le $\partial \bar{\partial}$ pour les courants prolongeables
  d{\'e}finis dans un domaine pseudoconvexe non born\'e de $ \mathbb{C}^{n} $.
  
  \ \ \ \ \ \ \ \ \ \ \ \ \ \ \ \ \ \ \ \ \ \ \ \ \ \ \ \ \ \ \ \ \ \ \ \ \ \
  \ {\textbf{Abstract}}
  
  We solve the $\partial \bar{\partial}$-problem for extensible currents
  defined on a unbounded pseudoconvex domain of $ \mathbb{C}^{n} $.
  
  {\textbf{Mots clefs}} : Courant prolongeable , $\partial \bar{\partial}$ ,
  cohomologie de De Rham.
  
  {\textsl{ math{\'e}matique }} 2010 : 32F32.
\end{abstract}

{\tmstrong{Introduction}}

Soit $\Omega \subset \mathbb{C}^n$ un ouvert, on se pose la question suivante
:

Si $T$ est un courant $d$-ferm{\'e} sur $\Omega$, existe - t - il un courant
prolongeable sur $\Omega$ tel que $\partial \bar{\partial} u = T$ ? Ayant
r{\'e}pondu {\`a} la question pour le cas o{\`u} $\Omega$ est la boule
euclidienne de $\mathbb{C}^n$ dans {\cite{SBD}} et aussi le cas des domaines
born{\'e}s et leurs compl{\'e}mentaires dans {\cite{HBS}} et {\cite{BDS}} , on
se demande si on peut r{\'e}pondre {\`a} la question pour tout domaine non
born{\'e} de $\mathbb{C}^n$ dont le compl{\'e}mentaire est aussi non
born{\'e}.

Tenant compte de consid{\'e}rations classiques, nous devons pour r{\'e}pondre
{\`a} cette question, avoir {\`a} r{\'e}soudre une {\'e}quation $( \ast)
\text{} d u = T$, o{\`u} $T$ est un courant prolongeable, la solution obtenue
se d{\'e}compose alors en une partie $\partial$-ferm{\'e}e et l'autre \
$\bar{\partial}$-ferm{\'e}e. Il est n{\'e}cessaire d'avoir des conditions
g{\'e}om{\'e}triques sur $\Omega$ pour obtenir des solutions du $\partial$
respectivement du $\bar{\partial}$ pour les courants prolongeables. Le domaine
$\Omega$ v{\'e}rifie comme dans \ {\cite{SBD}}, {\cite{HBS}} et {\cite{BDS}},
que les groupes de cohomologie de De Rham $H^j ( \Omega)$ et $H^j ( b \Omega)$
sont nuls pour $j \geqslant 1$. Le fait que $\Omega$ soit non born{\'e} ainsi
que son compl{\'e}mentaire nous oblige {\`a} adopter la r{\'e}solution {\`a}
support exact de l'op{\'e}rateur $d$. La r{\'e}solution du $\partial
\bar{\partial}$ devient alors une cons{\'e}quence des r{\'e}sultats de
r{\'e}solution du $\bar{\partial}$ pour les courants prolongeables obtenus
dans {\cite{Jud}}.

\section{Pr{\'e}liminaires et notations}

Soit $X$ est une vari{\'e}t{\'e} complexe de dimension $n$. On note
$\check{\mathcal{D}}'^p  (\Omega)$ l'espace des $p$-courants d{\'e}finis sur
$\Omega$ et prolongeables {\`a} X, $A_c^p ( \bar{\Omega})$ les $p$-formes
diff{\'e}rentielles de classe $\mathcal{C}^{\infty}$ sur $X$ {\`a} support
compact dans $\bar{\Omega}$. Sur $X$, on note $\check{\mathcal{D}}'^{p, q} 
(\Omega)$ l'espace des $(p, q)$-courants prolongeables d{\'e}finis sur
$\Omega$ et $A_c^{p, q} ( \bar{\Omega})$ l'espace des $(p, q)$-formes
diff{\'e}rentielles {\`a} support compact dans $\bar{\Omega}$. On note
$\check{\mathrm{H}}^p (\Omega)$ le $p^{\mathrm{ieme}}$ groupe de cohomologie
de De Rham des courants prolongeables d{\'e}finis sur $\Omega$,
$\check{\mathrm{H}}^{p, q} (\Omega)$ le $(p, q)^{\mathrm{ieme}}$ groupe de
cohomologie de Dolbeault des courants prolongeables d{\'e}finis sur $\Omega$.
Si $F \subset X$, alors \tmtextrm{H}$_{\infty}^p (F)$ d{\'e}signe le
$p^{\mathrm{ieme}}$ groupe de cohomologie de De Rham des $p$-formes
diff{\'e}rentielles de classe $\mathcal{C}^{\infty}$ d{\'e}finis sur $X$,
\tmtextrm{H}$_{\infty, c}^p (X)$ est le groupe de cohomologie de De Rham des
$p$-formes diff{\'e}rentiables de classe $\mathcal{C}^{\infty}$ sur $X$ {\`a}
support compact et enfin $A^p (F)$ l'espace des $p$-formes diff{\'e}rentielles
de classe $\mathcal{C}^{\infty}$ sur $F$. On note aussi, pour tout domaine $D$
de $X$, $b D$ le bord de $D$.

\begin{definition}
  Un courant $T$ d{\'e}fini sur $\Omega$ est dit prolongeable s'il existe un
  courant $\check{T}$ d{\'e}fini sur $\mathbb{C}^n$ tel que $\check{T}_{|
  \Omega} = T$.
\end{definition}

\section{R{\'e}solution de l'{\'e}quation $du = T$}

On consid{\`e}re $X =\mathbb{R}^{n + 1}$ et
\[ \Omega = \{ x \in \mathbb{R}^{n + 1} / x_{n + 1} > 0 \} \subset
   \mathbb{R}^{n + 1} \]
un domaine contractile et $b \Omega =\mathbb{R}^n \times \{ 0 \} \simeq
\mathbb{R}^n$ et son compl{\'e}mentaire
\[ \mathfrak{C}=\mathbb{R}^{n + 1} \backslash \bar{\Omega} =\mathbb{R}^n
   \times \{ x_{n + 1} < 0 \} . \]
$\Omega$ est un convexe non born{\'e} et son compl{\'e}mentaire $\mathfrak{C}$
est aussi convexe et non born{\'e}. On a donc $H^j ( \Omega) = 0$ \ et $H^j (
b \Omega) = 0$ pour $j \geqslant 1$. Ainsi le r{\'e}sultat principal de cette
partie est le suivant :

\begin{theorem}
  \label{thm1}{\tmdummy}
  
  \[ \check{\mathrm{H}}^j (\Omega) = 0 \text{\tmop{pour}} 1 \leq j \leq n + 1.
  \]
\end{theorem}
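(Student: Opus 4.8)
Mon plan est de ramener l'annulation de $\check{\mathrm{H}}^j(\Omega)$ à celle de la cohomologie de De Rham des courants sur $X=\mathbb{R}^{n+1}$ et de celle des courants à support dans le demi-espace fermé $\bar{\mathfrak{C}}=\{x_{n+1}\leq 0\}$, au moyen d'une suite exacte de restriction. Comme tout courant prolongeable sur $\Omega$ est, par définition, la restriction d'un courant défini sur $X$, l'application de restriction $\mathcal{D}'^p(X)\to\check{\mathcal{D}}'^p(\Omega)$ est surjective; son noyau est formé des courants de $X$ qui s'annulent sur $\Omega$, c'est-à-dire des courants à support dans $X\setminus\Omega=\bar{\mathfrak{C}}$. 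La restriction commutant à $d$, on obtient une suite exacte courte de complexes
\[ 0\to \mathcal{D}'^\bullet_{\bar{\mathfrak{C}}}(X)\to \mathcal{D}'^\bullet(X)\to \check{\mathcal{D}}'^\bullet(\Omega)\to 0, \]
où $\mathcal{D}'^\bullet_{\bar{\mathfrak{C}}}(X)$ désigne le complexe des courants de $X$ à support dans $\bar{\mathfrak{C}}$.

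La suite exacte longue de cohomologie associée fournit, pour $1\leq j\leq n+1$, le morceau
\[ \mathrm{H}^j(X)\to \check{\mathrm{H}}^j(\Omega)\to \mathrm{H}^{j+1}_{\bar{\mathfrak{C}}}(X)\to \mathrm{H}^{j+1}(X), \]
où $\mathrm{H}^\bullet(X)$ est la cohomologie de De Rham de $X$ calculée à l'aide des courants et $\mathrm{H}^\bullet_{\bar{\mathfrak{C}}}(X)$ celle des courants à support dans $\bar{\mathfrak{C}}$. Le théorème de De Rham pour les courants identifie $\mathrm{H}^\bullet(X)$ à la cohomologie usuelle de $\mathbb{R}^{n+1}$, qui est nulle en tout degré $\geq 1$ par contractilité. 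Les deux termes extrêmes s'annulant (pour $j\geq 1$ on a $j+1\geq 2\geq 1$), il restera l'isomorphisme $\check{\mathrm{H}}^j(\Omega)\cong \mathrm{H}^{j+1}_{\bar{\mathfrak{C}}}(X)$, et toute la question se réduit à l'annulation de ce dernier groupe.

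Pour celui-ci, j'utiliserais la suite exacte longue de cohomologie à support, relative au fermé $\bar{\mathfrak{C}}$ d'ouvert complémentaire $X\setminus\bar{\mathfrak{C}}=\Omega$~:
\[ \mathrm{H}^k(X)\to \mathrm{H}^k(\Omega)\to \mathrm{H}^{k+1}_{\bar{\mathfrak{C}}}(X)\to \mathrm{H}^{k+1}(X). \]
Comme $\Omega$ est convexe donc contractile, on a $\mathrm{H}^k(\Omega)=0$ pour $k\geq 1$ (c'est l'hypothèse $H^k(\Omega)=0$ mise en avant dans l'introduction); joint à $\mathrm{H}^{k+1}(X)=0$, ceci donne $\mathrm{H}^{m}_{\bar{\mathfrak{C}}}(X)=0$ pour tout $m\geq 2$ (le cas $m=1$ résultant de ce que $\mathrm{H}^0(X)\to\mathrm{H}^0(\Omega)$ est un isomorphisme). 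Puisque le degré qui intervient dans l'isomorphisme précédent vaut $j+1\geq 2$, on conclut $\check{\mathrm{H}}^j(\Omega)=0$ pour $1\leq j\leq n+1$; le degré maximal $j=n+1$ ne pose aucune difficulté, $\mathrm{H}^{n+2}_{\bar{\mathfrak{C}}}(X)$ étant nul pour des raisons de dimension.

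L'obstacle principal sera de justifier proprement, dans le cadre des courants, ces deux suites exactes longues, et surtout l'identification des groupes $\mathrm{H}^\bullet_{\bar{\mathfrak{C}}}(X)$ à la cohomologie à support de De Rham~: il faut s'assurer que le complexe des courants à support dans le fermé $\bar{\mathfrak{C}}$ est une résolution calculant cette cohomologie (version à support du théorème de De Rham, reposant sur la mollesse des faisceaux de courants). C'est là, et non dans le formalisme homologique, que résident les vérifications délicates; une fois ce point acquis, la géométrie très simple du demi-espace fournit immédiatement les annulations. Une variante consisterait à dualiser, $\check{\mathcal{D}}'^\bullet(\Omega)$ étant le dual de $A_c^{\,n+1-\bullet}(\bar{\Omega})$~: l'énoncé se ramènerait alors à l'annulation de la cohomologie à support compact de $\bar{\Omega}$, et c'est par la dualité de Poincaré--Lefschetz pour la variété à bord $\bar{\Omega}$ que les deux hypothèses $H^\bullet(\Omega)=0$ et $H^\bullet(b\Omega)=0$ interviendraient conjointement.
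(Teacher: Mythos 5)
Votre r{\'e}duction homologique est formellement correcte en son d{\'e}but : la suite exacte courte de complexes est exacte par d{\'e}finition m{\^e}me des courants prolongeables, et sa suite exacte longue donne bien, pour $j \geqslant 1$, un isomorphisme $\check{\mathrm{H}}^j (\Omega) \cong K^{j + 1}$, o{\`u} $K^{\bullet}$ d{\'e}signe la cohomologie du complexe $\Gamma_{\bar{\mathfrak{C}}} (X, \mathcal{D}'^{\bullet})$ des courants de $X$ {\`a} support dans $\bar{\mathfrak{C}}$. Mais tout le contenu du th{\'e}or{\`e}me est alors concentr{\'e} dans la v{\'e}rification que vous diff{\'e}rez, et celle-ci ne peut pas {\^e}tre acquise par le formalisme faisceautique : l'identification de $K^{\bullet}$ avec la cohomologie locale $\mathrm{H}^{\bullet}_{\bar{\mathfrak{C}}} (X, \mathbb{R})$ ne d{\'e}coule pas de la mollesse des faisceaux de courants, car la mollesse ne garantit l'acyclicit{\'e} que pour les familles de supports paracompactifiantes, et la famille des ferm{\'e}s contenus dans $\bar{\mathfrak{C}}$ ne l'est pas. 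Concr{\`e}tement, la mollesse de $\mathcal{D}'^p$ sur $X$ et sur $\Omega$ donne
\[ \mathrm{H}^1_{\bar{\mathfrak{C}}} (X, \mathcal{D}'^p) \cong \mathcal{D}'^p (\Omega) / \check{\mathcal{D}}'^p (\Omega) \neq 0, \]
la non-nullit{\'e} {\'e}tant exactement l'existence de courants non prolongeables sur $\Omega$ (le faisceau des courants est mou mais non flasque). La suite spectrale calculant $\mathrm{H}^{\bullet}_{\bar{\mathfrak{C}}} (X, \mathbb{R})$ {\`a} partir de la r{\'e}solution par les courants a donc deux lignes non nulles, et l'{\'e}cart entre $K^{\bullet}$ et $\mathrm{H}^{\bullet}_{\bar{\mathfrak{C}}} (X, \mathbb{R})$ est mesur{\'e} par la cohomologie du complexe quotient $\mathcal{D}'^{\bullet} (\Omega) / \check{\mathcal{D}}'^{\bullet} (\Omega)$, laquelle s'identifie en degr{\'e}s positifs {\`a} $\check{\mathrm{H}}^{\bullet + 1} (\Omega)$ : {\'e}tablir votre identification revient donc {\`a} prouver le th{\'e}or{\`e}me lui-m{\^e}me, et votre argument est circulaire. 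Seul le degr{\'e} maximal $j = n + 1$ s'obtient gratuitement dans votre sch{\'e}ma, puisque $K^{n + 2} = 0$ faute de courants de degr{\'e} $n + 2$.

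Le papier fournit pr{\'e}cis{\'e}ment l'ingr{\'e}dient analytique qui manque chez vous, et c'est votre variante finale, laiss{\'e}e sans d{\'e}veloppement, qui en est le bon point de d{\'e}part : par dualit{\'e} de Martineau (utilisant $\dot{\bar{\Omega}} = \Omega$), les courants prolongeables sur $\Omega$ forment le dual de l'espace des formes {\`a} support compact dans $\bar{\Omega}$ ; le lemme \ref{lem1} ($A_c^p (\bar{\Omega}) \cap \ker d = d (A_c^{p - 1} (\bar{\Omega}))$, prouv{\'e} {\`a} l'aide de l'op{\'e}rateur d'extension de Seeley) joue le r{\^o}le que vous attribuez {\`a} la dualit{\'e} de Poincar{\'e}--Lefschetz ; on en d{\'e}duit par Hahn--Banach une solution de $d S = T$ au voisinage de chaque compact $K \subset \bar{\Omega}$ ; enfin --- $\bar{\Omega}$ {\'e}tant non born{\'e}, l'espace des formes test n'est qu'une limite inductive d'espaces de Fr{\'e}chet et la dualit{\'e} globale ne s'applique pas telle quelle --- on recolle ces solutions locales par un argument d'exhaustion de type Mittag-Leffler. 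Ce sont ces trois {\'e}tapes (r{\'e}solution {\`a} support compact dans $\bar{\Omega}$, Hahn--Banach, exhaustion) qu'il vous faudrait d{\'e}velopper pour que votre approche aboutisse.
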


Pour d{\'e}montrer le th{\'e}or{\`e}me \ref{thm1}, on a besoin du lemme
suivant~:

\begin{lemma}
  \label{lem1}$A_c^p ( \bar{\Omega}) \cap \ker d = d ( A_c^{p - 1} (
  \bar{\Omega}))$ pour $1 \leq p \leq n + 1$.
\end{lemma}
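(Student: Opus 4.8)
The plan is to read the statement as the compactly supported Poincaré lemma for the half-space $\bar{\Omega}$ and to prove it by producing an explicit homotopy operator. The inclusion $d(A_c^{p-1}(\bar{\Omega})) \subseteq A_c^p(\bar{\Omega})\cap\ker d$ is immediate, since $d$ sends a form with compact support in $\bar{\Omega}$ to a form with compact support in $\bar{\Omega}$ and $d^2=0$; so the entire content is the reverse inclusion, namely that every closed form is exact with a primitive that is still compactly supported in $\bar{\Omega}$.

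First I would use coordinates $(x,t)=(x_1,\dots,x_n,x_{n+1})$ with $t=x_{n+1}\ge 0$, so that $\bar{\Omega}=\mathbb{R}^n\times[0,+\infty)$ and $d=d_x+dt\wedge\partial_t$, where $d_x$ is the exterior derivative in the $x$-variables. Any $\omega\in A_c^p(\bar{\Omega})$ decomposes uniquely as $\omega=\beta+dt\wedge\gamma$, where $\beta$ and $\gamma$ contain only the $dx_i$ and have coefficients smooth in $(x,t)$. I would then integrate the $dt$-component from $t$ up to $+\infty$: writing $\gamma=\sum_I\gamma_I\,dx_I$, set $h\omega=\sum_I\bigl(\int_t^{+\infty}\gamma_I(x,s)\,ds\bigr)\,dx_I$.

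The key computation is the homotopy identity $dh\omega+h\,d\omega=-\omega$. This is a direct calculation: differentiating under the integral sign produces the $d_x$-contributions $+\int_t^{+\infty}d_x\gamma\,ds$ in $dh\omega$ and $-\int_t^{+\infty}d_x\gamma\,ds$ in $h\,d\omega$, which cancel; the $t$-derivative of $\int_t^{+\infty}\gamma_I\,ds$ returns $-\gamma_I$, giving the term $-dt\wedge\gamma$; and the fundamental theorem of calculus applied to $\int_t^{+\infty}\partial_s\beta\,ds=-\beta$ (using that $\beta$ vanishes for $s$ large) gives $-\beta$. Summing yields $-(\beta+dt\wedge\gamma)=-\omega$. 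Setting $K=-h$ gives $dK+Kd=\mathrm{id}$, so any closed $\omega$ satisfies $\omega=d(K\omega)$, which is exactly the desired inclusion; the degenerate case $p=0$ is covered automatically, since then $h\omega=0$ and the identity reduces to $\omega=-h\,d\omega$.

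The step I expect to be the main obstacle, and the one that must be argued carefully, is that $K\omega$ genuinely lies in $A_c^{p-1}(\bar{\Omega})$. Smoothness up to $\{t=0\}$ is clear, since the integrand is smooth and compactly supported. The real point is the support: because I integrate from $t$ to $+\infty$, the primitive vanishes as soon as $t$ exceeds the $t$-extent of $\mathrm{supp}\,\omega$, hence it is compactly supported in $t$, and it inherits compact support in $x$ from $\omega$. Crucially, $K\omega$ need not vanish along $b\Omega=\{t=0\}$, and this is legitimate because compact support in $\bar{\Omega}$ does not impose vanishing on $b\Omega$. This boundary freedom is precisely what makes the result survive up to the top degree $p=n+1$: the classical obstruction $\int_{\bar{\Omega}}\omega$ is absorbed into the boundary term $\int_{b\Omega}K\omega$ of Stokes' formula, so no cohomology class persists. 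As a sanity check one may note that this matches Poincaré–Lefschetz duality, $\check{\mathrm{H}}$-free $H^p_c(\bar{\Omega})\cong H_{n+1-p}(\bar{\Omega},b\Omega)=0$, the relative homology vanishing because $b\Omega\hookrightarrow\bar{\Omega}$ is a homotopy equivalence of contractible spaces.
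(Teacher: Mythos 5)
Your homotopy computation is correct (the identity $dh+hd=-\mathrm{id}$ holds with the signs as you state them), and your route is genuinely different from the paper's. The paper argues locally: it encloses $\operatorname{supp} f$ in a large ball $\Omega'$, invokes the compactly supported Poincar\'e lemma there to produce $g$ with $dg=f$, observes that $g$ is $d$-closed on the half-ball $B=\Omega'\setminus\bar{\Omega}$, solves $dh=g_{|B}$ on that contractible piece, extends $h$ to a compactly supported $\tilde{h}$ by Seeley's operator, and finally corrects $u=g-d\tilde{h}$, which vanishes on $B$ and is therefore supported in $\bar{\Omega}$. Your single explicit fiber-integration operator $K$ replaces this three-step correction scheme, is uniform in the degree, and needs neither the Poincar\'e lemma on the ball nor any extension operator.

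There is, however, a genuine mismatch with the space the paper works in, located exactly at the step you declare ``legitimate''. Section 1 defines $A_c^{p-1}(\bar{\Omega})$ as the $(p-1)$-formes de classe $\mathcal{C}^{\infty}$ \emph{sur $X=\mathbb{R}^{n+1}$} with compact support contained in $\bar{\Omega}$; this is the space feeding the Martineau duality in the proof of Theorem \ref{thm1}. Such a form vanishes identically on the open set $\{t<0\}$, hence vanishes to infinite order on $b\Omega$. Your $K\omega$ equals $-\sum_I\bigl(\int_0^{+\infty}\gamma_I(x,s)\,ds\bigr)dx_I$ at $t=0$, in general nonzero, so its extension by zero across $b\Omega$ is not even continuous: under the paper's definition $K\omega\notin A_c^{p-1}(\bar{\Omega})$ at \emph{every} degree, and your proof does not close. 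Your Stokes remark in fact cuts both ways: with the paper's definition the statement itself is false at $p=n+1$, since a top-degree bump form supported inside $\Omega$ with nonzero integral is closed and admissible, yet cannot equal $dg$ with $g$ smooth and compactly supported on $X$ (Stokes on $X$ forces $\int_X dg=0$); and the paper's own proof breaks at the same spot, because its first step asserts compactly supported solvability on the ball in top degree, where $\mathrm{H}_{\infty,c}^{n+1}(\Omega')\neq 0$. So under your reading (forms smooth up to $b\Omega$, no boundary vanishing) the lemma is true for all $1\leq p\leq n+1$ and your proof of that version is complete; under the paper's stated definition the lemma holds only for $1\leq p\leq n$, and to recover that range by your method you should integrate from $-\infty$ rather than $+\infty$ (so the primitive automatically vanishes for $t<0$) and then kill the resulting $t$-independent tail $\sigma(x)=\int_{\mathbb{R}}\gamma(x,s)\,ds$, a closed compactly supported $(p-1)$-form on $\mathbb{R}^n$, using $\mathrm{H}_{\infty,c}^{p-1}(\mathbb{R}^n)=0$ for $p-1<n$ --- which is precisely where the restriction $p\leq n$ enters.
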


\begin{proof}

  Soit $f \in A_c^p ( \bar{\Omega}) \cap \ker d \nocomma$ , alors il existe
  $\Omega'$ une boule de centre $z_0$ et de rayon $R$ telle que pour $f \in
  A_c^p (\Omega') \cap \ker d \nocomma$ , $0 < p \leqslant n + 1$ , il existe
  $g \in A_c^{p - 1} ( \Omega')$ avec $d g = f$. Cela entraine que $d g_{| B
  \nobracket} = 0$ o{\`u} $B = \Omega' \cap ( \mathbb{R}^{n + 1} \backslash
  \Omega')$. Si $p = 1$ , alors $g$ est une constante {\`a} support compact
  donc $g = 0$ sur $B$.
  
  Si $1 < p \leqslant n + 1$ , on a $g_{| B \nobracket}$ est une $(p -
  1)$-forme $d$-ferm{\'e}e d'o{\`u} il existe donc une $(p - 2)$-forme
  diff{\'e}rentielle $h$ de classe $\mathcal{C}^{\infty}$ sur $\bar{B}
  \nobracket$ telle que $dh = g_{| B \nobracket}$. Soit $\tilde{h}$ une
  extension $\mathcal{C}^{\infty}$ {\`a} support compact de $h$ {\`a}
  $\Omega'$ (on peut utiliser l'op{\'e}rateur d'extension de {\cite{See}}) ,
  $u = g - d \tilde{h}$ est une $(p - 1)$-forme diff{\'e}rentiable de classe
  $\mathcal{C}^{\infty}$ sur $\mathbb{R}^{n + 1} \backslash \Omega$ {\`a}
  support compact dans $\Omega$ et $du = f$.
\end{proof}

{\tmstrong{Preuve}} (th{\'e}or{\`e}me \ref{thm1})

D'apr{\`e}s Martineau {\cite{Mart}}, Puisque $\dot{\bar{\Omega}} = \Omega$ ,
les courants d{\'e}finis sur $\Omega$ et prolongeables {\`a} $\mathbb{R}^{n +
1}$ sont des {\'e}l{\'e}ments de $( D^r ( \bar{\Omega}))'$ dual des $r$-formes
diff{\'e}rentielles de classes $\mathcal{C}^{\infty}$ sur $\mathbb{R}^{n +
1}$ {\`a} support compact sur $\bar{\Omega}$. Cependant $\bar{\Omega}$
n'{\'e}tant pas born{\'e} , $D^r ( \bar{\Omega})$ est une limite inductive
d'espaces de Fr{\'e}chet.

On va consid{\'e}rer un compact $K \subset \bar{\Omega}$ de $\mathbb{R}^{n +
1}$ et $D^r ( K)$ l'espace des $r$-formes diff{\'e}rentielles sur
$\mathbb{R}^{n + 1}$ {\`a} support compact dans $K$.
\begin{eqnarray*}
  L_T^K : d ( D^r ( \Omega) \cap D^r ( K) \cap \ker d) \longrightarrow &
  \mathbb{C} & \\
  \bar{\partial} \varphi  \longmapsto & \langle T, \varphi \rangle & 
\end{eqnarray*}
lin{\'e}aire continue et $d L_T^K$ s'{\'e}tend {\`a} un op{\'e}rateur
lin{\'e}aire continue

$\widetilde{L^K_T}$ : $D^{r + 1} ( \bar{\Omega}) \cap D^{r + 1} ( K)
\longmapsto \mathbb{C}$ \ c'est un courant prolongeable et

$d \widetilde{L^K_T} = ( - 1)^{n - r + 1} T$ sur $\dot{K}$.

On va consid{\'e}rer une famille $( K_n)_{n \in \mathbb{N}}$ de compacts de
$\bar{\Omega}$ on a sur $K_n$ , $\exists$ $S_n$ un courant prolongeable tel
que $d S_n = T$ sur $\dot{K_n}$ on a $K_n \Subset \dot{K}_{n + 1}$.

$S_{n + 1} - S_n$ est $d$-ferm{\'e} et $S_{n + 1} - S_n = d v_n$ sur
$\dot{K}_{n + 1}$.

Soit $\chi = 1$ sur un voisinage de $K_n$ contenu dans $K_{n + 1}$ et
\[ S_{n + 1} - d ( \chi v_n) = S_n + d ( 1 - \chi) v_n  \text{\tmop{sur}}
   \dot{K_n} \]
posons $U_{n + 1} = S_{n + 1} - d ( \chi v_n)$ et $U_n = S_n + d ( 1 - \chi)
v_n$.

On a $d U_{n + 1} = d U_n = T$ sur $\dot{K}_n$ et $U_{n + 1} = U_n$ sur
$K_n$. On pose
\[ S = \lim_n U_{n + 1} \]
c'est un courant prolongeable sur $\Omega$ et v{\'e}rifie $d S = T$.

\section{R{\'e}solution du $\partial \bar{\partial}$ pour les courants
prolongeables}

On va consid{\'e}rer maintenant le cas
\[ \Omega = \{ z = ( z_1, \ldots, z_n) \in \mathbb{C}^n / \tmop{Im} z_n > 0
   \} . \]

On donne le r{\'e}sultat suivant de r{\'e}solution du $\bar{\partial}$ {\`a}
support exact :

\begin{theorem}
  \label{thm5}
\end{theorem}

Soit $\Omega$ un domaine et $f \in A_c^{p, q} ( \bar{\Omega}) \cap \ker
\bar{\partial}$. Alors il existe $g \in A_c^{p, q - 1} ( \bar{\Omega})$ telle
que $\bar{\partial} g = f$ ; $1 \leqslant q \leqslant n$.

\begin{proof}

  C'est une cons{\'e}quence du r{\'e}sultat de r{\'e}solution du
  $\bar{\partial}$ {\`a} support exact de
  
  {\cite[th{\'e}or{\`e}me 4.2]{Jud}}. Si le support de $f$ est compact dans
  $\Omega$ , alors on prend un domaine pseudoconvexe $\Omega'$ dans $\Omega$
  qui contient le support de $f$ d'apr{\`e}s le r{\'e}sultat de
  {\cite[th{\'e}or{\`e}me 4.2]{Jud}} , il existe $g \in A_c^{p, q - 1} (
  \overline{\Omega'})$ telle que $\bar{\partial} g = f$.
  
  Si maintenant $\tmop{supp} ( f) \cap b \Omega \neq \varnothing$ , puis que
  $f$ est {\`a} support compact et $b \Omega$ L{\'e}vi plat , on peut trouver
  $K \subset \bar{\Omega}$ un compact d'int{\'e}rieur pseudoconvexe {\`a} bord
  lisse qui contient le support de $f$. D'apr{\`e}s {\cite[th{\'e}or{\`e}me
  4.2]{Jud}} \ il existe $h$ une $( p, q - 1)$-forme diff{\'e}rentielle {\`a}
  support dans $K$ telle que $d h = f \nosymbol .$ On {\'e}tend $h$ par $0$
  sur $\mathbb{C}^n \backslash K$ et on a la solution recherch{\'e}e. Ainsi
  pour toute $f \in A_c^{p, q} ( \bar{\Omega}) \cap \ker \bar{\partial}$, il
  existe $g \in A_c^{p, q - 1} ( \bar{\Omega})$ telle que $\bar{\partial} g =
  f$.
\end{proof}

Par dualit{\'e} classique (cf th{\'e}or{\`e}me \ref{thm1}), on a le
r{\'e}sultat suivant :

\begin{theorem}
  \label{thm5}
\end{theorem}

Soit $\Omega = \{ z = ( z_1, \ldots, z_n) \in \mathbb{C}^n / \tmop{Im} z_n >
0 \}$ et $T$ un courant prolongeable $\bar{\partial}$-ferm{\'e} sur $\Omega$.
Alors il existe $S$ un courant prolongeable d{\'e}fini sur $\Omega$ tel que
$\bar{\partial} S = T$.

Nous pouvons donc {\'e}tablir le r{\'e}sultat suivant :

\begin{theorem}
  
\end{theorem}

Soit $\Omega = \{ z = ( z_1, \ldots, z_n) \in \mathbb{C}^n / \tmop{Im} z_n >
0 \} \subset \mathbb{C}^n$ un domaine, alors pour tout $T$ un $( p, q)$
courant d{\'e}fini sur $\Omega$, prolongeable et $d$-ferm{\'e} , il existe $S$
un $( p + q - 1)$ courant sur $\Omega$ et prolongeable tel que $\partial
\bar{\partial} S = T$ avec $1 \leqslant p, q \leqslant n - 1$.

\begin{proof}

  Soit $T$ un $(p, q)$-courant, $1 \leqslant p \leqslant n$ et $1 \leqslant q
  \leqslant n$, $d$-ferm{\'e} d{\'e}fini sur $\Omega$ et prolongeable avec $1
  \leqslant p + q \leqslant 2 n$. Puisque le th{\'e}or{\`e}me \ref{thm1} \
  nous assure que
  
  $\check{\mathrm{H}}^{p + q} (\Omega) = 0$, il existe un courant
  prolongeable $\mu$ d{\'e}fini sur $\Omega$ tel que $d \mu = T$. $\mu$ est un
  $(p + q - 1)$-courant, il se d{\'e}compose en un $(p - 1, q)$-courant
  $\mu_1$ et en un
  
  $(p, q - 1)$-courant $\mu_2$. On a
  \[ d \mu = d (\mu_1 + \mu_2) = d \mu_1 + d \mu_2 = T. \]
  Comme $d = \partial + \bar{\partial}$, on a pour des raisons de bidegr{\'e},
  $\partial \mu_2 = 0$ et $\bar{\partial} \mu_1 = 0$. On obtient $\mu_1 =
  \partial u_1$ et $\mu_2 = \bar{\partial} u_2$ avec $u_1$ et $u_2$ des
  courants prolongeables d{\'e}finis sur $\Omega$ d'apr{\`e}s le
  th{\'e}or{\`e}me \ref{thm5}. On a donc :
  \begin{eqnarray*}
    T & = & \partial \mu_2 + \bar{\partial} \mu_1\\
    & = & \partial \bar{\partial} u_2 + \bar{\partial} \partial u_1\\
    & = & \partial \bar{\partial}  (u_2 - u_1)
  \end{eqnarray*}
  Posons $S = u_2 - u_1$ , $S$ est un $(p - 1, q - 1)$-courant prolongeable
  d{\'e}fini sur $\Omega$ tel que $\partial \bar{\partial} S = T$.
\end{proof}

\begin{remark}
  On a la d{\'e}composition de la solution sous la forme suivante :
  \[ S = S^{( p - 1, q)} + S^{( p, q - 1)} + S^{( p - 2, q + 1)} + \cdots +
     S^{( 0, p + q - 1)} + S^{( p + q - 1, 0)} \]
  mais l'autre partie, pour des raisons de bidegr{\'e} est $d$-ferm{\'e}e.
  Quitte {\`a} prendre une autre solution il y'a pas perte de
  g{\'e}n{\'e}ralit{\'e} de consid{\'e}rer $S = S^{( p - 1, q)} + S^{( p, q -
  1)}$.
\end{remark}

Universit{\'e} Assane Seck de Ziguinchor

{\tmname{Mamadou Eramane Bodian}} \
{\tmem{\href{https://fr-mg42.mail.yahoo.com/neo/launch\#}{\label{yui_3_16_0_ym19_1_1480498582910_3453}eramane20era@yahoo}}}

{\tmname{Waly Ndiaye}} :
{\tmem{\href{https://fr-mg42.mail.yahoo.com/neo/launch\#}{\label{yui_3_16_0_ym19_1_1480498582910_2223}walyunivzig@yahoo.fr}}}

{\tmname{Salomon Sambou}} :
{\tmem{\href{https://fr-mg42.mail.yahoo.com/neo/launch\#}{\label{yui_3_16_0_ym19_1_1480498582910_3711}ssambou@univ-zig.sn}}}


\begin{thebibliography}{33333}
  \bibitem{BDS}\label{bib-BDS} E. Bodian, D. Diallo, S. Sambou.
  {\tmem{R{\'e}solution du $\partial \bar{\partial}$ pour les courants
  prolongeables d{\'e}finis sur une vari{\'e}t{\'e} contractile.}} Soumis
  {\`a} Afrika Matematika.
  
  \bibitem{God}\label{bib-God} C. Godbillon. {\tmem{Topologie alg{\'e}brique,
  Hermann Paris, 1971.}}
  
  \bibitem{HBS}\label{bib-HBS}E. Bodian, I. Hamidine, S. Sambou.
  {\tmem{R{\'e}solution du $\partial \bar{\partial}$ pour les courants
  prolongeables d{\'e}finis dans un anneau}}. Soumis {\`a} Publications de
  l'Institut Math{\'e}matique (Beograd).
  
  \bibitem{Jud}\label{bib-Jud}Judith Brinkschulte. {\tmem{The \
  $\bar{\partial}$-problem with support conditions on some weakly pseudoconvex
  domains Ark. Mat. , 42 (2004), 283-300.}}
  
  \bibitem{Mart}\label{bib-Mart} A. Martineau. {\tmem{Distribution et valeurs
  au bord des fonctions holomorphes. Strasbourg \ \ \ \ \ RCP 25 (1966).}}
  
  \bibitem{SBD}\label{bib-SBD} S. Sambou, E. Bodian, D. Diallo.
  {\tmem{R{\'e}solution du $\partial \bar{\partial}$ pour les courants
  prolongeables d{\'e}finis sur la boule euclidienne de $\mathbb{C}^n$. C. R.
  Math. Rep. Acad. Sci. Canada Vol. 38 (2016), pp. 34--37.}}
  
  \bibitem{Samb}\label{bib-Samb}S. Sambou. {\tmem{R{\'e}solution du
  $\bar{\partial}$ pour les courants prolongeables d{\'e}finis dans un anneau.
  Annales de la facult{\'e} des sciences de Toulouse $6^e$ s{\'e}rie, tome 11,
  $n^{\circ} 1$ (2002), p. 105-129.}}
  
  \bibitem{Sam}\label{bib-Sam} S. Sambou. {\tmem{R{\'e}solution du
  $\bar{\partial}$ pour les courants prolongeables. Math. Nachrichten
  \tmtextbf{235} (2002), pg. 179-190. }}
  
  \bibitem{Seel}\label{bib-Seel}R. T. Seeley. {\tmem{Extension of $C^{\infty}$
  functions defined in a half space. Proceeding , AMS 15 (1964) 625 - 626}}.
\end{thebibliography}
\end{document}